\documentclass[11pt]{article}
\usepackage[a4paper]{geometry}
\usepackage{amsfonts,amsmath,amssymb,amsthm,bm,graphicx,hyperref}

\newcommand{\Aa}{\mathcal{A}}
\newcommand{\Cc}{\mathbb{C}}
\newcommand{\gammachanged}{\gamma_{\text{changed}}}
\newcommand{\gammat}{\tilde{\gamma}}
\newcommand{\geqs}{\geqslant}
\newcommand{\Hh}{\underline{H}}
\newcommand{\Hht}{\underline{\tilde{H}}}
\newcommand{\Ht}{\tilde{H}}
\newcommand{\Kk}{\mathcal{K}}
\newcommand{\Mm}{\mathcal{M}}
\newcommand{\leqs}{\leqslant}
\newcommand{\Pe}{\textit{P\!e}}
\renewcommand{\Re}{\mathrm{Re}}
\newcommand{\Rr}{\mathbb{R}}
\newcommand{\Rrnn}{\mathbb{R}^{n\times n}}

\newcommand{\tol}{\mathtt{tol}}

\newtheorem{lemma}{Lemma}
\newtheorem{proposition}{Proposition}
\newtheorem{remark}{Remark}

%\title{Efficient residual--time restarting for shift-and-invert
%Krylov subspace evaluations of matrix exponential actions}

%\title{Shift-and-invert Krylov subspace residual--time restarting for 
%computing exponential actions of nonsymmetric matrices}

\title{An accurate restarting for shift-and-invert Krylov subspaces\\
computing matrix exponential actions\\of nonsymmetric matrices}

\author{Mikhail A. Botchev\thanks{Keldysh Institute of Applied Mathematics,
Russian Academy of Sciences, Miusskaya~Sq.~4, 125047 Moscow,
Russia and
Marchuk Institute of Numerical Mathematics, Russian Academy of Science,
Gubkina~St.~8, 119333 Moscow, Russia, \url{botchev@ya.ru}.
This work is supported by the Russian Science Foundation grant
No.~19-11-00338.}}

\begin{document}
\maketitle

\begin{abstract}
An accurate residual--time (AccuRT) restarting for computing
matrix exponential actions of nonsymmetric matrices 
by the shift-and-invert (SAI) Krylov subspace
method is proposed.  
The proposed restarting method is an extension of the recently 
proposed RT (residual--time) restarting and it is designed
to avoid a possible accuracy loss in the conventional
RT restarting.
%The proposed technique is guaranteed to 
%converge to required tolerance in exact arithmetic for any
%restart length.
An expensive part of the SAI Krylov method is solution of
linear systems with the shifted matrix.  
Since the AccuRT algorithm adjusts the shift value, we discuss 
how the proposed restarting can be implemented with just a 
single LU~factorization (or a preconditioner setup) of the shifted matrix.
Numerical experiments demonstrate an improved accuracy and 
efficiency of the approach.

\textbf{Key words}
Shift-and-invert Krylov subspace methods; 
%Krylov subspace methods\sep 
exponential time integration; %\sep
Arnoldi method; %\sep
Krylov subspace restarting %\sep
\end{abstract}

\section{Introduction}
Computing action of the matrix exponential on a given vector
is a common task often occurring in various applications such as
time integration~\cite{HochbruckOstermann2010}, 
network analysis or 
model order reduction.  
If the matrix is large then Krylov subspace
methods form an important group of methods that are well suited
for this task~\cite{FrommerSimoncini08a}.  
Other methods for computing matrix exponential
actions for large matrices include
Chebyshev polynomials~\cite{BergamaschiVianello2000} and
the scaling and squaring method combined with Taylor series~\cite{AlmohyHigham2011}.
Krylov subspace computations of the matrix exponential and other
matrix functions has been an active research area, with many important
developments such as
rational Krylov subspace
methods~\cite{MoretNovati04,EshofHochbruck06,DruskinKinizhnermanZaslavsky2009,PhD_Guettel,Guettel12},
restarting
techniques~\cite{CelledoniMoret97,TalEzer2007,Afanasjew_ea08,Eiermann_ea2011,GuettelFrommerSchweitzer2014,JaweckiAuzingerKoch2018}
and interesting large-scale computational
applications~\cite{HochbruckLubich99b,DruskinKinizhnermanZaslavsky2009,Hochbruck_Pazur_ea2015,BornerErnstGuettel2015,BotchevHanseUppu2018}.

Restarting techniques allow to keep the number Krylov basis vectors
(i.e., the Krylov subspace dimension) bounded while trying to preserve
convergence properties of the method.
Recently proposed residual--time (RT) restarting for computing
matrix exponential actions seems to be an attractive technique~\cite{BoKn2020}.
One of its advantages is that the RT-restarted polynomial Krylov subspace
methods are guaranteed to converge to a required tolerance for
any restart length~\cite{BoKn2020}.
Another nice feature of the RT restarting is that the size of the
small projected problem which has to be solved in the course
of iterations does not grow with each restart (as is 
the case for some other restarting methods,
see e.g.~\cite[Chapter~3]{PhD_Niehoff}).
For instance, for a restart length~10
the Krylov subspace dimension and the size of the projected
problem are at most~10.
In addition, the projected problem in the RT-restarted Krylov
subspace methods is evaluation of the matrix exponential
of a small matrix (a projection of the original large matrix).
This is a simple problem which can be efficiently carried out by
one of the standard linear algebra
techniques~\cite{GolVanL,19ways,Higham_bookFM}.
In contrast, the projected problem in the so-called residual-based
restarting~\cite{CelledoniMoret97,BGH13} is a system of nonautonomous
ordinary differential equations (ODEs), see~\cite[formula(3)]{CelledoniMoret97}.
Although this ODE system is small-sized, its solution is typically more
costly and requires additional care, such as a proper choice of an ODE solver and
its parameters.

An important class of rational Krylov subspace methods for the
matrix exponential is shift-and-invert (SAI) Krylov subspace 
method~\cite{MoretNovati04,EshofHochbruck06}.
It often appears to be efficient in various applications
because it requires solution of linear systems with a single shifted
matrix only.
One of the shortcomings of the RT restarting proposed in~\cite{BoKn2020}
is that the accuracy of the SAI Krylov method combined with RT restarting
can not be guaranteed.  This paper proposes an extension of the RT restarting
for the SAI Krylov subspace method which is 
% guaranteed, at least in exact arithmetic, 
designed 
to attain a required accuracy.  The proposed technique works
for nonsymmetric matrices.  Moreover, we show how
to implement the proposed modification efficiently, so that
an LU factorization or a preconditioner setup has to be carried out only
once.

The paper is organized as follows.
Our restarting technique, which we call AccuRT (accurate residual--time
restarting) is described in Section~\ref{s:main}.
There we first give some background information on Krylov
subspace methods (Section~2.1), then we introduce
and analyze AccuRT
(Section~2.2)
and discuss how to organize solution of the shifted systems
efficiently (Section~2.3).
In Section~\ref{s:exper} numerical experiments for
two test problems are presented.
Finally, we draw some conclusions in Section~\ref{s:concl}.

\section{AccuRT: an accurate residual--time restarting}
\label{s:main}
Let, unless indicated otherwise, $\|\cdot\|$ denote the 2-norm.
Throughout the paper we assume that for $A\in\Rrnn$ holds
\begin{equation}
\label{fov}
\Re (x^*Ax)\geqslant 0, \qquad \forall x\in\Cc^n,
\end{equation}
where $\Re(z)$ denotes the real part of a complex number $z$.

\subsection{Krylov subspace methods and RT restarting}
Assume the matrix exponential action has to be computed for a given
matrix $A\in\Rrnn$, $v\in\Rr^n$ and $t>0$, i.e.,
\begin{equation}
\label{expA}
\text{compute} \quad y := \exp(-t A)v.
\end{equation}
The problem is equivalent to solving initial value problem (IVP)
\begin{equation}
\label{ivp}
y'(t)=-Ay(t), \quad y(0)=v,
\end{equation}
where we slightly abuse the notation by using $t$ for both the independent variable 
in~\eqref{ivp} and for time interval length in~\eqref{expA}.
Krylov subspace method for computing the matrix exponential action can be seen
as a Galerkin projection of IVP~\eqref{ivp} on the Krylov subspace
\begin{equation}
\label{Kk}  
\Kk_k(A,v) = \mathrm{span} (v,Av,A^2v,\dots, A^{k-1}v).
\end{equation}
First, an orthonormal basis of $\Kk_k(A,v)$ is computed by the standard 
Arnoldi (or, if $A=A^T$, by Lanczos) process~\cite{GolVanL,Parlett:book,Henk:book,SaadBook2003}
and stored as 
the columns $v_1$, $v_2$, \dots, $v_k$ of a matrix $V_k=[ v_1 \dots v_k]\in\Rr^{n\times k}$,
such that
the following Arnoldi decomposition holds:
\begin{equation}
\label{Arnoldi}
AV_k = V_{k+1} \Hh_k = V_kH_k + h_{k+1,k}v_{k+1}e_k^T, 
\end{equation}
where $e_k=(0,\dots,0,1)^T\in\Rr^k$, $\Hh_k\in\Rr^{(k+1)\times k}$ is 
an upper Hessenberg matrix and $H_k\in\Rr^{k\times k}$ contains the first $k$ rows 
of $\Hh_k$.
Then
the Krylov subspace approximation $y_k(t)\approx \exp(-t A)v$ is defined 
as~\cite{DruskinKnizh89,Knizh91,HochLub97}
\begin{equation}
\label{yk}
y_k(t) = V_k u(t),
\end{equation}
where $u(t): \Rr\rightarrow\Rr^k$ solves IVP with the projected matrix $H_k=V_k^TAV_k$
\begin{equation}
\label{ivp1}
u'(t) = -H_k u(t), \quad u(0) = \beta e_1.
\end{equation}
Here $\beta=\|v\|$ and $e_1=(1,0,\dots,0)^T\in\Rr^k$.
Note that IVP~\eqref{ivp1} is a Galerkin projection of~\eqref{ivp} on the 
Krylov subspace and that $u(t)$ can be computed as
\begin{equation}
\label{expH}
u(t) = \exp(-tH_k) \beta e_1.
\end{equation}
If $k$ is not too large, \eqref{expH} is a preferred way to compute $u$ 
which can be implemented by one of the standard dense linear algebra 
routines~\cite{Higham_bookFM}.  
Computing $\exp(-t H_k)$ is usually quicker than solving the ODE system 
in~\eqref{ivp1}, which requires choosing a suitable ODE solver (e.g., stiff or non-stiff)
and its parameters.
Krylov subspace method~\eqref{Arnoldi}--\eqref{expH} is sometimes called
polynomial Krylov subspace method to emphasize the fact that vectors
of the subspace~\eqref{Kk} are polynomials in $A$.

A natural way of controlling the error of the Krylov subspace approximation~\eqref{yk}
is to monitor the residual $r_k(t)$ of $y_k(t)$ with respect to the ODE $y'=-Ay$,
namely~\cite{CelledoniMoret97,DruskinGreenbaumKnizhnerman98,BGH13}
\begin{equation}
\label{rk1}
r_k(t) = -Ay_k(t) - y_k'(t).
\end{equation}
The residual is readily available in the course of the Arnoldi process
computations as~\cite{CelledoniMoret97,DruskinGreenbaumKnizhnerman98,BGH13}
\begin{equation}
\label{rk2}
r_k(t) = -h_{k+1,k} (e_k^T u(t)) v_{k+1},
\end{equation}
where we see that $r_k(t)$ is a scalar function times $v_{k+1}$.
Hence, $V_k^Tr_k(t)=0$ for all $t$ and~\eqref{yk} is indeed a Galerkin projection
on $\Kk_k(A,v)$. 
Some residual convergence results can be found, e.g., in~\cite{BoKn2020}.

\begin{figure}
\centering{\includegraphics[width=0.8\textwidth]{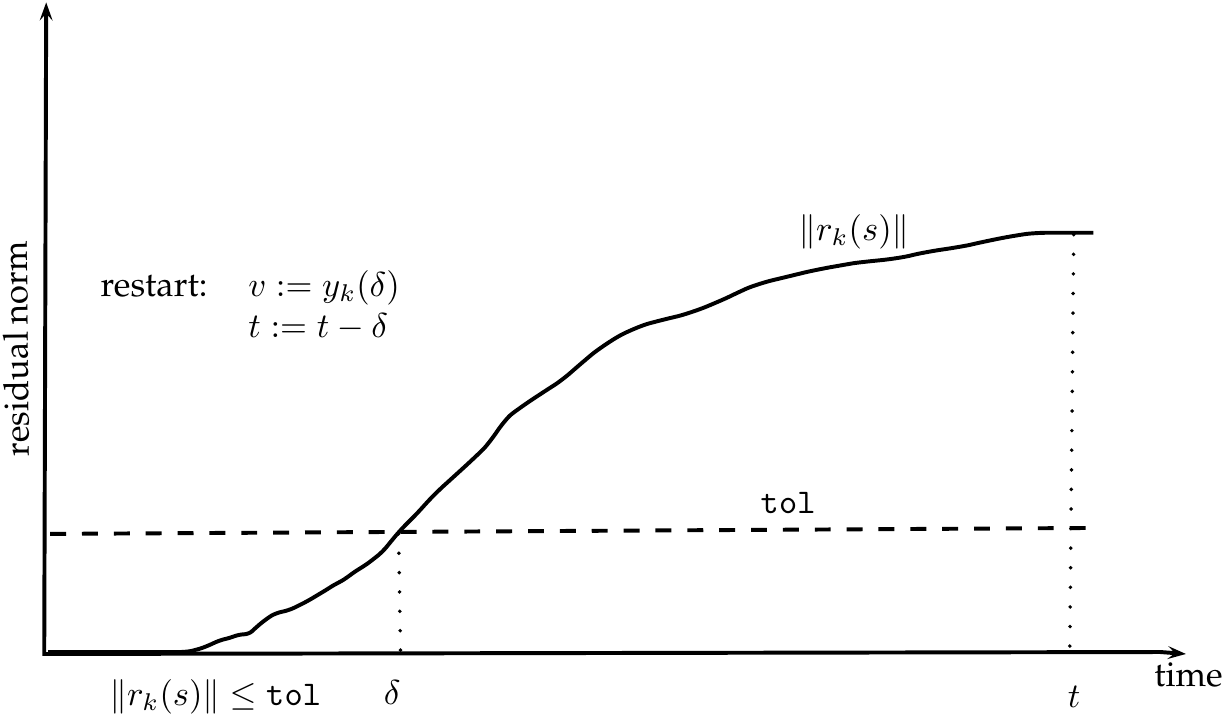}}
\caption{A sketch of the RT restarting procedure, adopted from our paper~\cite{BoKn2020}}
\label{f:RT}
\end{figure}

The SAI (shift-and-invert) Krylov subspace method~\cite{MoretNovati04,EshofHochbruck06}
for computing~\eqref{expA} 
differs from the standard polynomial Krylov subspace method described above
in that it builds the Krylov subspace for the shifted-and-inverted matrix
$(I+\gamma A)^{-1}$ instead of $A$, with $\gamma>0$
being a fixed parameter.  This is done to accelerate convergence: 
the Arnoldi process tends to emphasize the largest
eigenvalues of $(I+\gamma A)^{-1}$ corresponding to the smallest eigenvalues of 
$A$.  The latter are important for the exponential which is a quickly decaying
function (assuming real arguments).  The price to pay for this accelerated
convergence is the necessity to solve a linear system with the matrix $I+\gamma A$
at each Krylov step.
The Arnoldi decomposition~\eqref{Arnoldi} for the SAI matrix
$(I+\gamma A)^{-1}$, i.e.,
$$
(I+\gamma A)^{-1}V_k = V_{k+1} \Hht_k = V_k\tilde{H}_k + \tilde{h}_{k+1,k}v_{k+1}e_k^T,   
$$
is more convenient to use in a transformed form
\begin{equation}
\label{Arn_sai}
AV_k = V_kH_k - \frac{\tilde{h}_{k+1,k}}{\gamma}(I+\gamma A)v_{k+1}e_k^T\tilde{H}_k^{-1},
\end{equation}
where the notation $\tilde{\cdot}$ is used to indicate that the projection
is built for the SAI matrix $(I+\gamma A)^{-1}$ and $H_k$ is defined
as the inverse SAI transformation
\begin{equation}
\label{Hk_sai}  
H_k = \frac1\gamma (\tilde{H}_k^{-1}-I).
\end{equation}
Note that the matrices $V_{k+1}$ and $H_k$ here are different
from those in~\eqref{Arnoldi}. 
The SAI Krylov subspace method is analyzed, e.g., 
in~\cite{MoretNovati04,EshofHochbruck06,PhD_Guettel}. % Volker!

In the SAI Krylov subspace method the residual can be easily computed
as~\cite{BGH13}
\begin{equation}
\label{rk_sai}
r_k(t) = \frac{\tilde{h}_{k+1,k}}{\gamma}(e_k^T\tilde{H}_k^{-1}u(t))\cdot
(I+\gamma A)v_{k+1},
\end{equation}
where $u(t)$ is the solution of the projected IVP~\eqref{ivp1},
with $H_k$ being the back transformed SAI matrix~\eqref{Hk_sai}.

The shift value $\gamma$ is usually chosen in accordance with the length $t$
of the time interval $[0,t]$,
see~\cite{EshofHochbruck06}.
A possible often used value is $\gamma=t/10$.
Hence, changing $\gamma$ means changing $t$.  The usual polynomial Krylov
subspace method~\eqref{Arnoldi}--\eqref{expH} has an attractive property 
that it is invariant of $t$: once $V_{k+1}$ and $\Hh_k$ are computed, they
can be successfully used for any $t$ (although the quality of approximation
$y_k(t)\approx y(t)$ does deteriorates with $t$).
Unfortunately, this nice property is not fully shared by the SAI Krylov subspace
method: its matrices $V_{k+1}$ and $\tilde{\Hh}_k$ do depend on $\gamma$
which, in turn, depends on $t$.
However, in practice, one certainly can use the computed Arnoldi
matrices $V_{k+1}$ and $\tilde{\Hh}_k$ for a certain range of $t$
without recomputing them.

Recently proposed RT (residual--time) restarting is based on the fact that 
the residual as a function of $t$ tends to be a non-decreasing function.
Hence, once a maximum number $k_{\max}$ Krylov steps are done (so that storing
and working with more Krylov basis vectors is too expensive),
we can find a time subinterval $[0,\delta]$, $\delta<t$, where the residual
norm is already sufficiently small.  We can then restart the Krylov method
by setting $v:=y_{k_{\max}}(\delta)$, decreasing the time interval $t:=t-\delta$ 
and performing the next $k_{\max}$ Krylov steps solving problem~\eqref{expA}
with the modified $v$ and $t$.
The RT restarting procedure is sketched in Figure~\ref{f:RT}.

\begin{figure}
\includegraphics[width=0.47\textwidth]{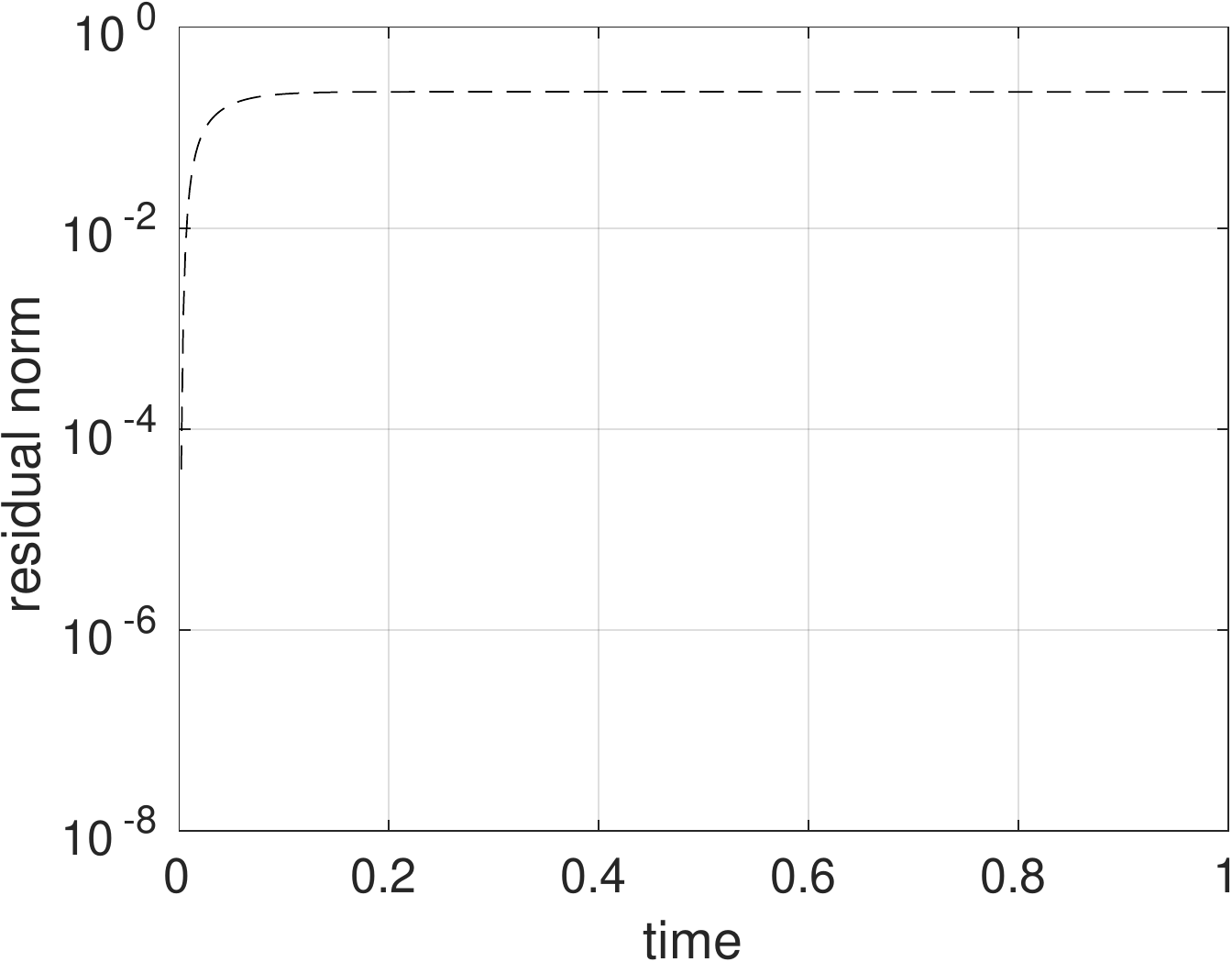}~~~%
\includegraphics[width=0.47\textwidth]{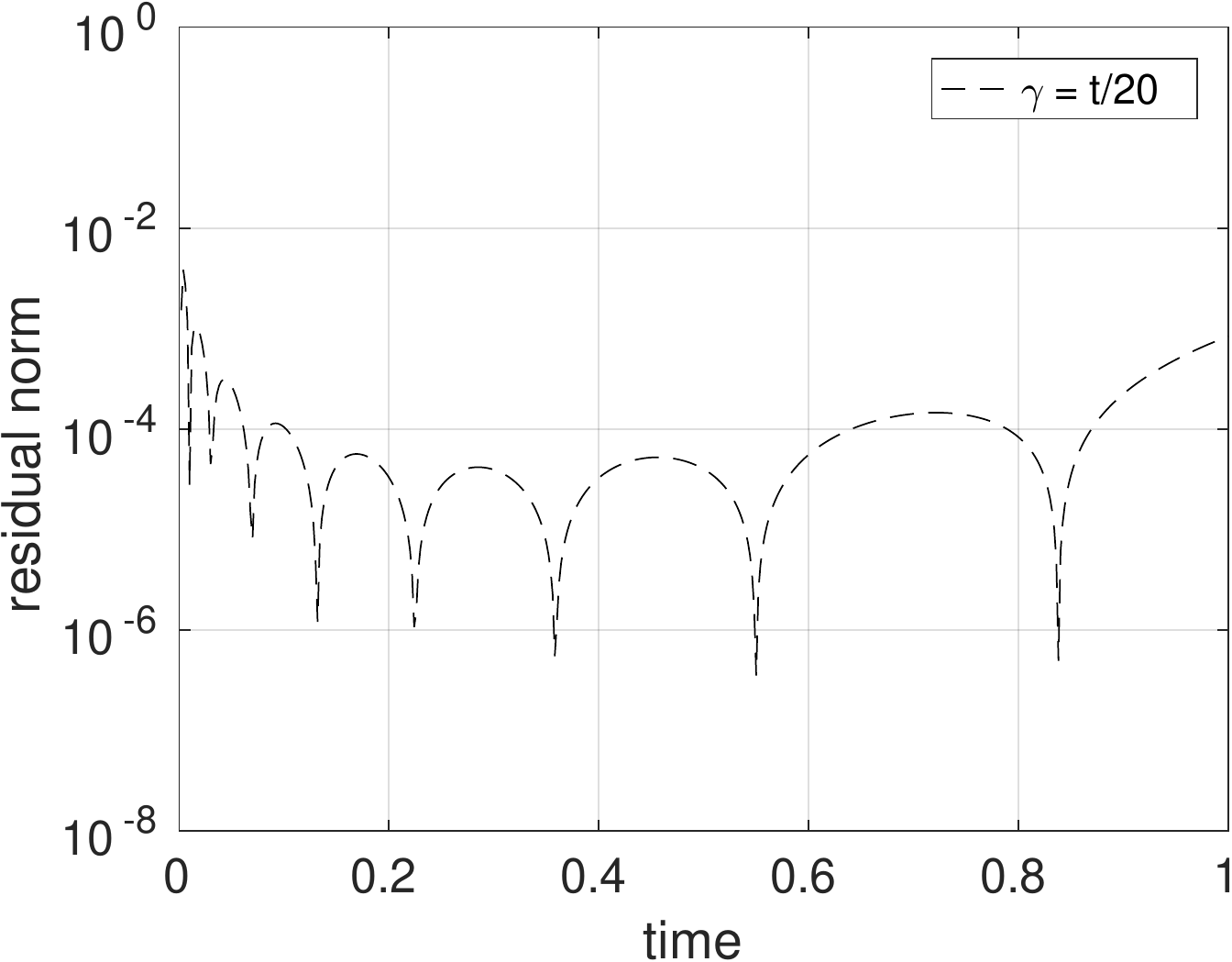}
\caption{Residual norms $\|r_k(s)\|$ as functions of time $s\in[0,t]$, $t=1$, for the polynomial
  (left) and SAI (right) Krylov subspace methods, after $k=10$ Krylov steps.
  The SAI shift value is $\gamma=t/20$.
  The matrix $A$ is a discretized
  convection--diffusion operator for the
  Peclet number $\Pe=200$ and $802\times 802$ grid (see Section~\ref{s:t1}
  for more detail). For both plots the residual norm values are computed
  in 500 equidistant points in $[0,1]$.}
\label{f:rk}  
\end{figure}

\begin{figure}
\includegraphics[width=0.47\textwidth]{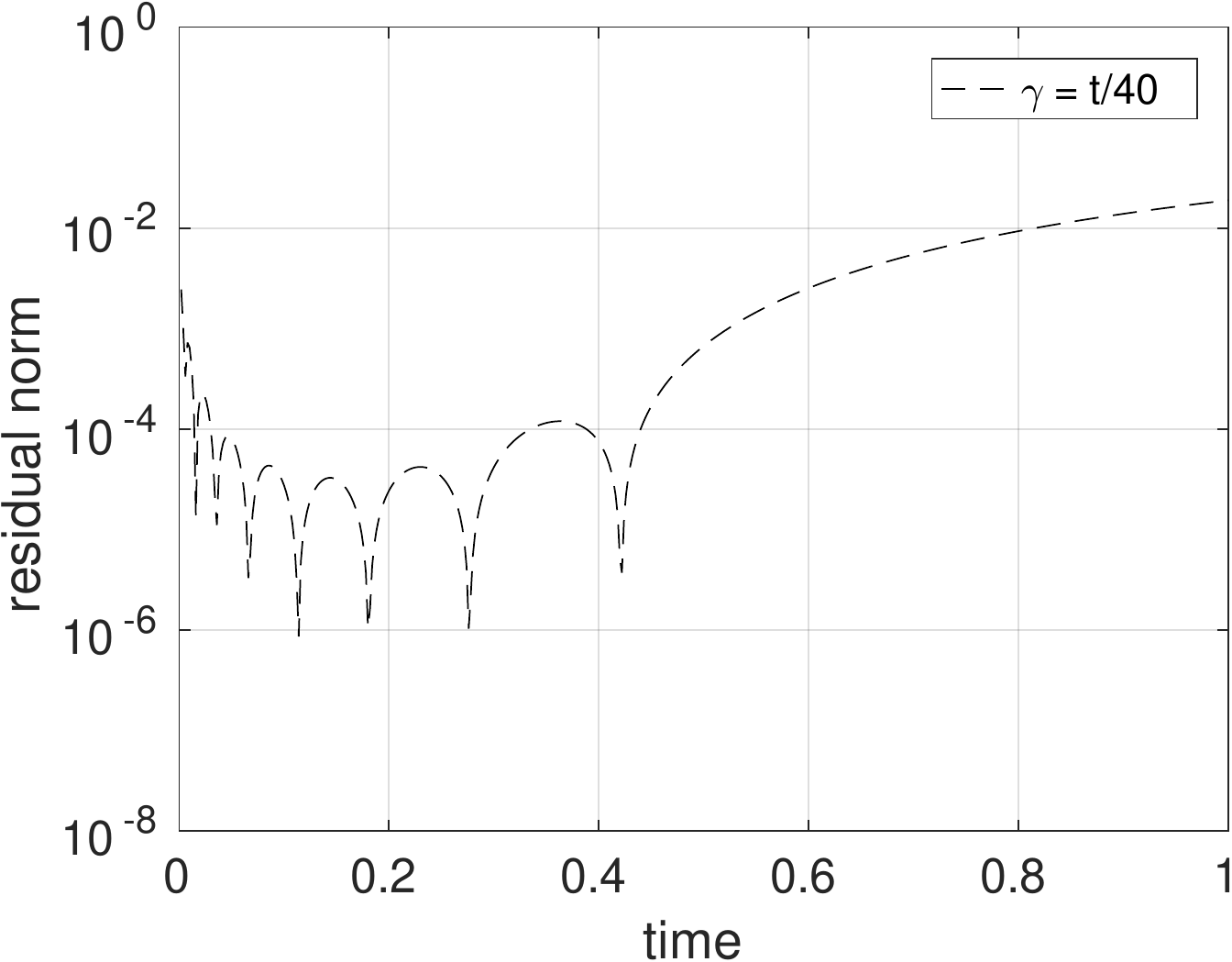}~~~%
\includegraphics[width=0.47\textwidth]{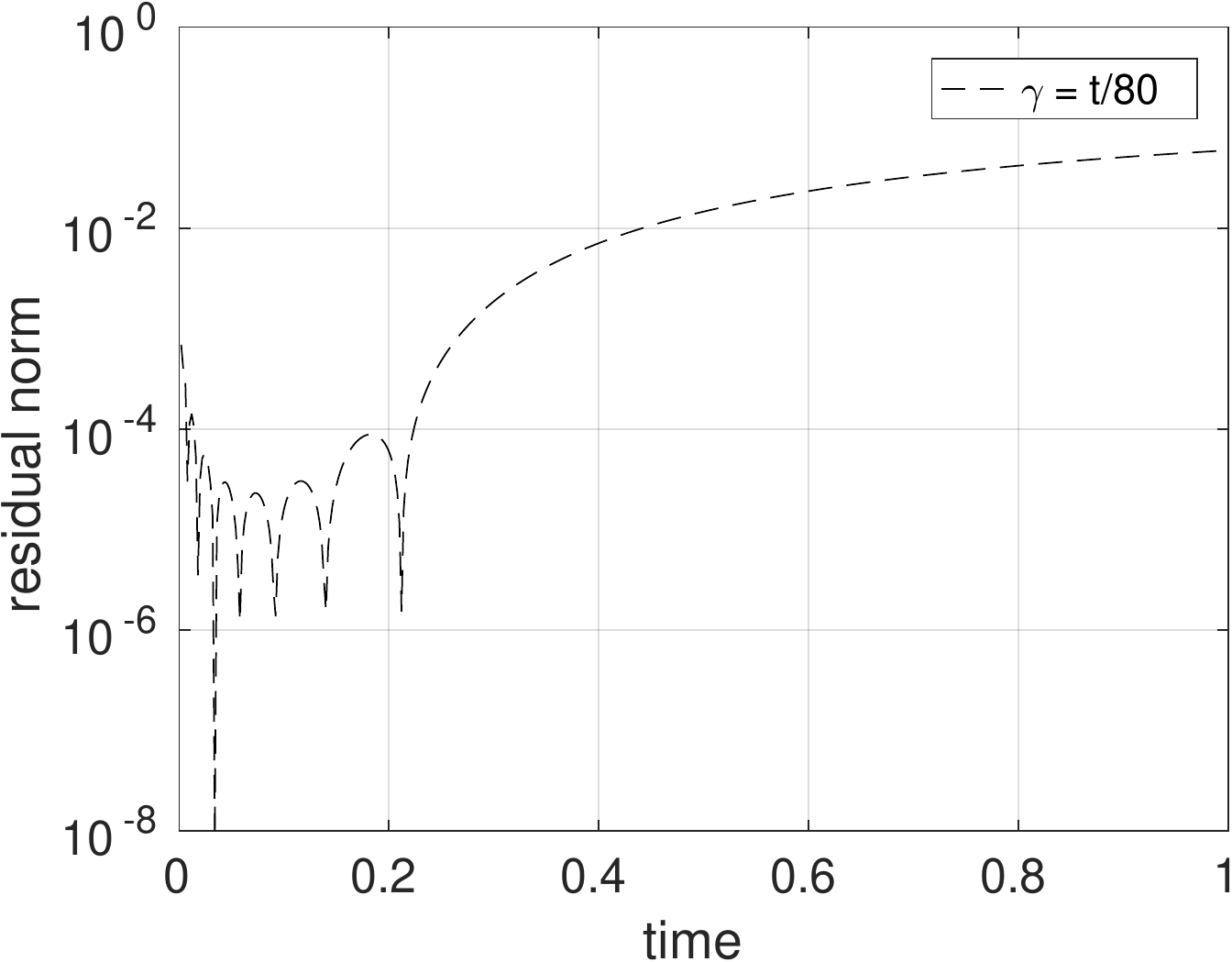}
\caption{Residual norms $\|r_k(s)\|$ as functions of time $s\in[0,t]$, $t=1$, for the SAI
  Krylov subspace methods for $\gamma=t/40$ (left) and $\gamma=t/80$ (right),
  after $k=10$ Krylov steps.
  The matrix $A$ is a discretized
  convection--diffusion operator for the
  Peclet number $\Pe=200$ and $802\times 802$ grid (see Section~\ref{s:t1}
  for more detail). For both plots the residual norm values are computed
  in 500 equidistant points in $[0,1]$.}
\label{f:rk1}  
\end{figure}

\subsection{AccuRT ideas and algorithm}
The residual as a function of $t$ in the SAI Krylov subspace method 
exhibits a much more irregular behavior than in the polynomial Krylov 
method~\eqref{Arnoldi}--\eqref{expH}, see Figure~\ref{f:rk}.
If the RT restarting is applied for the SAI Krylov subspace
method then it can happen that $\delta$ such that $\|r_k(s)\|$ is below
a certain tolerance for $s\in[0,\delta]$ is too small to be used
in practice for an efficient restarting.  Of course, we could also restart
by setting $\delta$ to any as large as possible point $s$ where $\|r_k(s)\|$
is small enough,
cf.~Figure~\ref{f:rk}.  However, there is no guarantee that
$\min_{s\in[0,t]}\|r_k(s)\|$ is within the required tolerance and, if this
is the case, the restarting with any $\delta\in[0,t]$
inevitably leads to an accuracy loss.

Here we propose an approach to fix this failure of the RT
restarting in the SAI Krylov subspace method.
The approach is based on the following two simple observations.

\begin{enumerate}
\item 
Since $\gamma$ is usually chosen proportionally to $t$, 
taking a smaller shift value $\gamma$ effectively means a shorter
time interval $[0,t]$.  For nonsymmetric matrices $A$
the SAI Krylov residual $r_k(s)$ tends to become smaller in
norm with smaller $\gamma$ on some time subinterval
$s\in[0,\delta]$, $0<\delta<t$, see Figure~\ref{f:rk1}.
\item
As already noted, to change $\gamma$ in the SAI Krylov subspace
method we have to carry out the whole Arnoldi process anew. 
However, once a shifted linear system with the matrix $I+\gamma A$
is solved for a certain shift $\gamma$,
a part of the spent computational
costs can be re-used for solving the shifted linear systems
with a smaller shift $\gammat\leqs\gamma$.
In particular, if 
a (sparse) LU factorization is computed for a certain shift
$\gamma$, it can be successfully used as a preconditioner
for solving shifted systems with $I+\gammat A$,
$\gammat\leqs\gamma$ (see Proposition~\ref{sai_syst}).
\end{enumerate}
Based on these observations we propose to organize an improved
RT restarting for the SAI Krylov subspace method as follows.
Assume we can carry out at most $k_{\max}$ steps of the Arnoldi or Lanczos
process, since storing (or working with) more than $k_{\max}$
Krylov basis vectors is too expensive.
We perform $k=1,\dots,k_{\max}$ steps checking at each step
the residual norm $\|r_k(t)\|$, cf.~\eqref{rk_sai}.
If the residual norm turns out to be smaller than the
required tolerance, we stop.  Otherwise, after performing
step~$k=k_{\max}$, we analyze the function $\|r_k(s)\|$ for $s\in[0,t]$.
If no point $s$ can be found where $\|r_k(s)\|$ is small enough,
we decrease $\gamma$ by a factor of two and repeat
$k=1,\dots,k_{\max}$ steps of the method.  The restarting
procedure (which is presented in detail in Figure~\ref{f:alg})
can be repeated until the residual norm is small enough
at the end of the given time interval.

If a (sparse) LU factorization is prohibitively expensive,
a preconditioned solver can be used to solve
the shifted linear system.  In the algorithm presented
in Figure~\ref{f:alg} we then replace the LU factorization
step by a setup of the preconditioner.
The computed preconditioner can then be used for all the shift
values, i.e., it suffices to set up the preconditioner once.
Note that the observed behavior of the SAI Krylov residual
for decreasing $\gamma$ does not hold for symmetric $A$.
This is reflected by a rather sharp
estimate~\cite[Lemma~3.1]{EshofHochbruck06}
(in the estimate there, take $\mu=0$ and set $\gamma$ proportional to $t=\tau$).

The following lemma and proposition show that the residual $r_k(s)$
of the SAI Krylov subspace method is bounded in norm as a function of
time.  The way it is bounded depends on $\gamma$.

\begin{lemma}
\label{lemmaHk}
Let $A\in\Rrnn$ be a matrix for which relation~\eqref{fov} holds
and let $H_k$ be the matrix obtained in the SAI Krylov subspace 
method, see \eqref{Arn_sai},\eqref{Hk_sai}. Then there exists
a constant $\omega_k\geqs0$ such that 
\begin{equation}
\label{stb_Hk}  
\|\exp(-tH_k)\|\leqs e^{-t\omega_k}.
\end{equation}
\end{lemma}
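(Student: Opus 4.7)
The plan is to reduce the exponential bound to a standard field-of-values argument. Recall that for any square matrix $M$,
\[
\|\exp(-tM)\| \leqs e^{-t\omega}, \qquad t \geqs 0,
\]
where $\omega := \min_{0\neq z}\Re(z^* M z)/(z^*z)$ equals the smallest eigenvalue of $(M+M^*)/2$. Setting $\omega_k$ to this quantity for $M=H_k$, it therefore suffices to show that the field of values of $H_k$ lies in the closed right half-plane, i.e.\ that $\omega_k\geqs 0$.

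The main step is then to exploit the definitions $H_k = \gamma^{-1}(\Ht_k^{-1}-I)$ and the Galerkin identity $\Ht_k = V_k^T(I+\gamma A)^{-1}V_k$ to translate the numerical range of $H_k$ back to that of $A$. Writing $\gamma\, z^* H_k z = z^* \Ht_k^{-1}z - z^*z$ and substituting $y := \Ht_k^{-1}z$ removes the inverse and gives $\gamma\, z^* H_k z = y^*\Ht_k^* y - \|\Ht_k y\|^2$. Introducing $w := V_k y$ and $u := (I+\gamma A)^{-1}w$, one has $\Ht_k y = V_k^T u$ and $w = (I+\gamma A)u$, whence a short manipulation yields
\[
\gamma\,\Re(z^* H_k z) = \Re(w^* u) - \|V_k^T u\|^2 = \|u\|^2 + \gamma\Re(u^* Au) - \|V_k^T u\|^2.
\]
Orthonormality of the columns of $V_k$ makes $V_k V_k^T$ an orthogonal projector, so $\|u\|^2 - \|V_k^T u\|^2 = \|(I-V_k V_k^T)u\|^2\geqs 0$, and assumption~\eqref{fov} gives $\Re(u^* Au)\geqs 0$. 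Both contributions are nonnegative, so $\Re(z^* H_k z)\geqs 0$ for every $z\in\Cc^k$, and hence $\omega_k\geqs 0$.

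The only delicate point is the choice of substitution $z=\Ht_k y$: without it, the inverse in $\Ht_k^{-1}$ obstructs any direct appeal to~\eqref{fov}. Once this substitution is in place the calculation is routine, relying only on~\eqref{fov} and on the fact that $V_k$ has orthonormal columns. In particular, the Hessenberg subdiagonal entry $\tilde h_{k+1,k}$ and the rest of the Arnoldi data play no role in the bound.
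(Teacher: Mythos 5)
Your proof is correct, but it reaches the conclusion by a genuinely different route than the paper. The paper never touches the field of values of $H_k$ directly: it defines $\omega_k:=\frac1\gamma(\|\Ht_k\|^{-1}-1)$, bounds $\|\Ht_k\|=\|V_k^T(I+\gamma A)^{-1}V_k\|\leqs\|(I+\gamma A)^{-1}\|\leqs 1/(1+\gamma\omega)$ to get $\omega_k\geqs\omega\geqs0$, and then passes back and forth between resolvent-norm bounds and numerical-range bounds via the cited equivalences in Hundsdorfer--Verwer. You instead prove the numerical-range inclusion $\Re(z^*H_kz)\geqs0$ head-on, using the substitution $z=\Ht_ky$ together with $u=(I+\gamma A)^{-1}V_ky$ to expose the identity $\gamma\,\Re(z^*H_kz)=\|(I-V_kV_k^T)u\|^2+\gamma\,\Re(u^*Au)$, after which both terms are manifestly nonnegative; this is a clean, self-contained argument (essentially the standard proof that the SAI-Arnoldi back-transform preserves accretivity) that needs only the one-way implication from numerical range to exponential decay. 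Each approach buys something: yours yields the sharpest admissible constant (the numerical abscissa of $H_k$) and avoids relying on the converse direction of the resolvent/numerical-range equivalence for a single $\gamma$, whereas the paper's choice of $\omega_k$ is directly computable from $\|\Ht_k\|$ (convenient since the same constant is reused in Proposition~\ref{p:rk_sai}) and additionally relates $\omega_k$ to the constant $\omega$ of the full matrix $A$. Since the lemma only asserts existence of some $\omega_k\geqs0$, either constant serves.
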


\begin{proof}
Let $\omega=\min_{x\in\Cc^n,\|x\|=1}\Re(x^*Ax)$.
It is well known (see, e.g., \cite[Theorem~2.4]{HundsdorferVerwer:book}) 
that
$$
\Re(x^*Ax) \geqs\omega,\;\forall x\in\Cc^n \quad\Leftrightarrow\quad 
\|\exp(-tA)\|\leqs e^{-t\omega}.
$$
According to~\eqref{fov}, these two equivalent relations hold with
$\omega\geqs 0$.
Furthermore, these conditions are equivalent to
\cite[Theorem~2.13]{HundsdorferVerwer:book}
$$
\|(I+\gamma A)^{-1}\|\leqs \frac1{1+\gamma\omega},
$$   
which holds for all $\gamma>0$ and all $\omega\in\Rr$ 
provided that $1+\gamma\omega>0$.  
We assume $\gamma>0$, as is the case 
in the SAI Krylov subspace method.
Let us define 
$$
\omega_k := \frac1\gamma(\|\tilde{H}_k\|^{-1}-1),
$$
so that $\|\tilde{H}_k\|=1/(1+\gamma\omega_k)$.
We have
$$
\frac1{1+\gamma\omega_k}=\|\tilde{H}_k\| = \|V_k^T(I+\gamma A)^{-1}V_k\|
\leqs\|(I+\gamma A)^{-1}\|\leqs\frac1{1+\gamma\omega},
$$
from which it follows that $0\leqs\omega\leqs\omega_k$.
Since $\tilde{H}_k=(I+\gamma H_k)^{-1}$ (cf.\ \eqref{Hk_sai}),
we obtain, again using \cite[Theorem~2.13]{HundsdorferVerwer:book},
\begin{equation}
\label{stb_Hk1}  
\|(I+\gamma H_k)^{-1}\|=\frac1{1+\gamma\omega_k}\leqs\frac1{1+\gamma\omega}
\quad\Leftrightarrow\quad 
\Re(x^*H_kx)\geqs \omega_k,\;\forall x\in\Cc^k,
\end{equation}
which is equivalent to inequality~\eqref{stb_Hk} we prove.
\end{proof}

Define function $\varphi(z)$ as (see, e.g.,~\cite{HochbruckOstermann2010})
\begin{equation}
\label{phi}
\varphi(z)=(e^z-1)/z.  
\end{equation}

\begin{proposition}
\label{p:rk_sai}
Let $A\in\Rrnn$ be a matrix for which relation~\eqref{fov} holds
and let $r_k(t)$ be the
residual of the SAI Krylov subspace method defined by~\eqref{rk_sai},
applied to solve problem~\eqref{expA}.
% for $t\in[0,\tfinal]$, where $\gamma=\tfinal/\alpha$ is fixed, $\alpha>0$.
Then it holds for all $t\geqs 0$
\begin{align}
\label{rk_sai1}
r_k(t) &= \beta_k(t) \cdot w_{k+1}, \quad 
\beta_k(t)=\frac{\tilde{h}_{k+1,k}}{\gamma}e_k^T (I+\gamma H_k) u(t),
\quad w_{k+1}=(I+\gamma A)v_{k+1},
\\
\notag
\|r_k(t)\|
&= |\beta_k(t)|\, \|w_{k+1}\|,
\\
\notag
|\beta_k(t)| &\leqs
\beta\tilde{h}_{k+1,k}\left(
\frac{1}{\gamma} \min\left\{
t\|(I+\gamma H_k)H_k\|\, \varphi(-t\omega_k)\, ,\,
\|I+\gamma H_k\| (1+e^{-t\omega_k} )
\right\} \right.
\\
\label{rk_est}
& \qquad\qquad\qquad\qquad\qquad\qquad\qquad\qquad
  \qquad\qquad\qquad\qquad\qquad
+ |h_{k,1}|
\biggr )
\end{align}
where $u(t)$ is defined by \eqref{expH},\eqref{Hk_sai},
$\omega_k\geqs0$ is the constant introduced in~\eqref{stb_Hk}
and $\tilde{h}_{k+1,k}$ and $h_{k,1}$ are the corresponding
entries of the matrices $\Hht_k\in\Rr^{(k+1)\times k}$ and
$H_k\in\Rr^{k\times k}$, respectively (see~\eqref{Arn_sai},\eqref{Hk_sai}).
Here, the minimum is taken among the two elements of the set
indicated by the braces $\{\dots\}$.
Note that $\Hht_k$ in the estimate above depends on $\gamma$
(and, hence, so do $H_k$, $u(t)$, and $\omega_k$).
\end{proposition}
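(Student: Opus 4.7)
The plan is to start from the closed-form residual expression \eqref{rk_sai} and use the SAI back-transformation \eqref{Hk_sai}, which yields the identity $\Ht_k^{-1}=I+\gamma H_k$. Substituting this directly into \eqref{rk_sai} factors out the scalar $\beta_k(t)=(\tilde{h}_{k+1,k}/\gamma)e_k^T(I+\gamma H_k)u(t)$ from the vector $w_{k+1}=(I+\gamma A)v_{k+1}$, giving both the stated decomposition $r_k(t)=\beta_k(t)\,w_{k+1}$ and the norm identity $\|r_k(t)\|=|\beta_k(t)|\,\|w_{k+1}\|$ as an immediate consequence of $\beta_k(t)$ being scalar.

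The bulk of the work is the bound on $|\beta_k(t)|$. I would substitute $u(t)=\exp(-tH_k)\beta e_1$ from \eqref{expH}, and then obtain the two alternatives inside the minimum by two different algebraic decompositions of $(I+\gamma H_k)\exp(-tH_k)$, each producing the common $|h_{k,1}|$ term that sits outside the minimum:
\begin{itemize}
\item For the first alternative, use the $\varphi$-identity $\exp(-tH_k)=I-tH_k\,\varphi(-tH_k)$. Then $e_k^T(I+\gamma H_k)I\,e_1=\gamma h_{k,1}$ (since $e_k^Te_1=0$ for $k\geqs 2$), and the remaining piece contributes $(t/\gamma)\|(I+\gamma H_k)H_k\|\,\|\varphi(-tH_k)\|$.
\item For the second alternative, write instead $\exp(-tH_k)=I+(\exp(-tH_k)-I)$; the identity piece again produces $\gamma h_{k,1}$, while the remainder is bounded crudely by $\|I+\gamma H_k\|\cdot(\|\exp(-tH_k)\|+1)\leqs\|I+\gamma H_k\|(1+e^{-t\omega_k})$ using Lemma~\ref{lemmaHk}.
\end{itemize}
Collecting factors of $1/\gamma$ and $\beta$ and taking absolute values gives the two candidate upper bounds; taking their minimum yields the claimed estimate.

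The one non-routine step is the inequality $\|\varphi(-tH_k)\|\leqs\varphi(-t\omega_k)$ required in the first branch. I would obtain it from the integral representation $\varphi(-tH_k)=\int_0^1\exp(-stH_k)\,ds$, applying Lemma~\ref{lemmaHk} under the integral: $\|\exp(-stH_k)\|\leqs e^{-st\omega_k}$, and then integrating the scalar bound over $s\in[0,1]$ recovers precisely $\varphi(-t\omega_k)$ (which, since $\omega_k\geqs 0$, is a real nonnegative number by monotonicity of $\varphi$ on the real line).

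The main obstacle is essentially bookkeeping: one has to be careful that the $h_{k,1}$ contribution factors identically out of \emph{both} decompositions so that it can legitimately be pulled outside the minimum, and that the factors of $\gamma$, $\beta$, and $\tilde{h}_{k+1,k}$ are tracked consistently across the two branches. Nothing in the argument is technically deep once the right decomposition of $\exp(-tH_k)$ is chosen; the slight subtlety is recognizing that the short-time (first) branch is sharpened by the $\varphi$-representation, while the long-time (second) branch is best handled by the trivial triangle inequality $\|\exp(-tH_k)-I\|\leqs\|\exp(-tH_k)\|+1$.
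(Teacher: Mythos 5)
Your proposal is correct and follows essentially the same route as the paper: the splitting of $u(t)$ into the identity piece $u(0)=\beta e_1$ (yielding the $|h_{k,1}|$ term via $e_k^T(I+\gamma H_k)e_1=\gamma h_{k,1}$) plus a remainder bounded in two ways ($\varphi$-identity for small $t$, triangle inequality with Lemma~\ref{lemmaHk} for large $t$) is exactly the paper's decomposition $|\beta_k(t)|\leqs|\beta_k(0)|+|\beta_k(t)-\beta_k(0)|$. The only cosmetic difference is that you justify $\|\varphi(-tH_k)\|\leqs\varphi(-t\omega_k)$ explicitly via the integral representation, where the paper cites the literature for the same argument.
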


\begin{proof}
Relation~\eqref{rk_sai1} is identical to~\eqref{rk_sai} (cf.~\eqref{Hk_sai}),
its proof can be found
in~\cite{BGH13}.  Using \eqref{Hk_sai},\eqref{ivp1}, we have
$$
\begin{aligned}
|\beta_k(0)| &=\frac{\tilde{h}_{k+1,k}}{\gamma}|e_k^T\Ht_k^{-1}u(0)|=
\frac{\tilde{h}_{k+1,k}}{\gamma}|e_k^T(I+\gamma H_k)\beta e_1|
\\
&=\tilde{h}_{k+1,k}|e_k^T H_k\beta e_1|=\beta\tilde{h}_{k+1,k}|h_{k,1}|.
\end{aligned}
$$
Furthermore, 
it is not difficult to check that, cf.~\eqref{phi},
$$
u(t)-u(0) = (\exp(-t H_k) - I )u(0) = -tH_k\varphi(-tH_k)u(0).
$$
Therefore, with $u(0)=\beta e_1$ and $\Ht_k^{-1}=I+\gamma H_k$,
we can estimate
\begin{equation}
\label{est_ut1}
\begin{aligned}
&\|(I+\gamma H_k)(u(t)-u(0)\| =
t\|(I+\gamma H_k)H_k\varphi(-tH_k)u(0)\|
\\
&\leqs
t\|(I+\gamma H_k)H_k\| \|\varphi(-tH_k)\| \|u(0)\|\leqs
\beta t\|(I+\gamma H_k)H_k\| \varphi(-t\omega_k).
\end{aligned}
\end{equation}
Here we used an inequality
$$
\|\varphi(-tH_k)\|\leqslant \varphi(-t\omega_k),
$$
which holds due to the property~\eqref{stb_Hk},
\eqref{stb_Hk1}, 
see, e.g.,~\cite[proof of Lemma~2.4]{HochbruckOstermann2010}.
The estimate~\eqref{est_ut1} is especially useful for small $t$.
An alternative estimate, which may be sharper for large $t$,
is
\begin{equation}
\label{est_ut2}
\begin{aligned}
&\| (I+\gamma H_k)(u(t)-u(0)\| =
  \|(I+\gamma H_k) (\exp(-tH_k)-I)u(0)\|
\\
&\leqs
\|I+\gamma H_k\| \|\exp(-tH_k)-I\| \|u(0)\|
\leqs
\beta\|I+\gamma H_k\|(1+\|\exp(-tH_k)\|)
\\
&\leqs
\beta\|I+\gamma H_k\|(1+e^{-t\omega_k}),
\end{aligned}
\end{equation}
where the estimate~\eqref{stb_Hk} is used.
From~\eqref{est_ut1},\eqref{est_ut2} it follows that
$$
\| (I+\gamma H_k)(u(t)-u(0)\|\leqs\beta\min
\left\{
t\|(I+\gamma H_k)H_k\| \varphi(-t\omega_k),
\|I+\gamma H_k\|(1+e^{-t\omega_k})
\right\}.
$$
We can then estimate
$$
\begin{aligned}
& |\beta_k(t) |\leqs|\beta_k(t)-\beta_k(0)| + |\beta_k(0)|
\\
&=
\frac{\tilde{h}_{k+1,k}}{\gamma}|e_k^T(I+\gamma H_k)(u(t)-u(0))|
+ \beta\tilde{h}_{k+1,k}|h_{k,1}|
\\
&\leqs
\frac{\tilde{h}_{k+1,k}}{\gamma}\| (I+\gamma H_k)(u(t)-u(0)\|
+ \beta\tilde{h}_{k+1,k}|h_{k,1}|
\\
&\leqs
\frac{\tilde{h}_{k+1,k}}{\gamma}
\beta\min
\left\{
t\|(I+\gamma H_k)H_k\| \varphi(-t\omega_k),
\|I+\gamma H_k\|(1+e^{-t\omega_k})
\right\}
+ \beta\tilde{h}_{k+1,k}|h_{k,1}|
\\ &=
\beta\tilde{h}_{k+1,k}\left(\frac{1}{\gamma}
\min\left\{
t\|(I+\gamma H_k)H_k\| \varphi(-t\omega_k),
\|I+\gamma H_k\|(1+e^{-t\omega_k})
\right\}
+ |h_{k,1}|\right),
\end{aligned}
$$
which proves~\eqref{rk_est}.
\end{proof}

We note that the estimate~\eqref{rk_est} is, unfortunately, far from
sharp to reflect the noticed dependence of the residual $r_k(t)$
on $\gamma$
(cf.\ Figures~\ref{f:rk} and~\ref{f:rk1}).
However, the following remark should be made.

\begin{remark}
Numerical experiments show that the value $|h_{k,1}|$
(recall $|\beta_k(0)|=\beta\tilde{h}_{k+1,k}|h_{k,1}|$)
appearing in~\eqref{rk_est} is usually small, typically many orders
of magnitude smaller than the other term $\frac1{\gamma}\min\{\dots\}$
appearing in
the right hand side of~\eqref{rk_est}.
If $\beta_k(0)=0$ then \eqref{rk_est} formally shows
that for any $k$ and any tolerance $\varepsilon>0$ a time
interval $[0,\delta]$ can be found such that
$\|r_k(s)\|\leqs\varepsilon$ for $s\in[0,\delta]$.
In this case the RT and AccuRT restarting strategies
are guaranteed to converge for any restart length.
Of course, such a $\delta$ can still
be too small to be used in practice,
so that adjusting $\gamma$, as is done in AccuRT,
may be necessary to make the restarting practical.
\end{remark}

\begin{figure}
  \centering{\texttt{\begin{minipage}{0.8\linewidth}
\% Given: $A\in\Rrnn$, $v\in\Rr^n$, $t>0$, $k_{\max}$ and $\tol>0$ \\
convergence $:=$ false \\
$\gammachanged := \texttt{false}$\\
carry out LU factorization $LU:=I+\gamma A$\\
while (not(convergence) and $t>0$)  \\\hspace*{2em}
   $\beta := \|v\|$                  \\\hspace*{2em}
   $v_1 := v/\beta$                   \\\hspace*{2em}
   for $k=1,\dots,k_{\max}$          \\\hspace*{4em}
       if $\gammachanged$                                  \\\hspace*{6em}
          solve $(I+\gamma A)w = v_k$ iterative,         \\\hspace*{8em}
               preconditioned by $LU$                      \\\hspace*{4em}
       else\\\hspace*{6em}
          solve $(I+\gamma A)w = v_k$ by LU factorization\\\hspace*{4em}
       end                                               \\\hspace*{4em}
       for $i=1,\dots,k$                                 \\\hspace*{6em}
           $\tilde{h}_{i,k} := w^Tv_i$                      \\\hspace*{6em} 
	   $w      := w - \tilde{h}_{i,k}v_i$               \\\hspace*{4em}  
       end                                               \\\hspace*{4em}  
       $h_{k+1,k} := \|w\|$                                 \\\hspace*{4em}     
       $H_k := \frac1\gamma ( \tilde{H}_k^{-1} - I)$        \\\hspace*{4em}
       compute $u(s_j)$, $\|r_k(s_j)\|$, $s_j=jt/3$, $j=1,2,3$
                                                         \\\hspace*{4em}
       resnorm $:= \max_j\|r_k(s_j)\|$                     \\\hspace*{4em}
       if resnorm $\leqs\tol$ and $k>1$                  \\\hspace*{6em} 
          convergence $:=$ true                           \\\hspace*{6em} 
          break loop for $k=\dots$                       \\\hspace*{4em} 
       elseif $k=k_{\max}$  ~~~~~ %                               \\\hspace*{6em}
          \% --- restart at step $k_{\max}$           \\\hspace*{6em}
          compute $\|r_k(s_j)\|$, $s_j=jt/500$, $j=1,\dots,500$   \\\hspace*{6em}
          $r_{\min} := \min_j\|r_k(s_j)\|$                   \\\hspace*{6em}
          if $r_{\min} > \tol$                             \\\hspace*{8em} 
              $\delta := 0$                                \\\hspace*{8em} 
              $\gamma         := \gamma/2$                 \\\hspace*{8em} 
              $\gammachanged := \texttt{true}$             \\\hspace*{6em} 
          else                                            \\\hspace*{8em}
              $\delta := \max\{ s_j \; | \; \|r_k(s_j)\|\leqs\tol\}$
                                                          \\\hspace*{6em}  
          end                                             \\\hspace*{6em} 
          $u := \exp(-\delta H_k) e_1$                      \\\hspace*{6em}  
          $v := V_k (\beta u)$                               \\\hspace*{6em}
          $t    := t -\delta$                               \\\hspace*{6em} 
%          break loop for $k=\dots$                         \\\hspace*{4em} 
       end                                                 \\\hspace*{4em} 
       $v_{k+1} := w/h_{k+1,j}$                                \\\hspace*{2em} 
   end                                                     \\
end                                                        \\
$y_k := V_k (\beta u(s_3))$
\end{minipage}}}
  \caption{Description of the AccuRT restarting algorithm.
  The algorithm computes SAI Krylov subspace approximation $y_k(t)\approx\exp(-tA)v$ such that
  for its residual $r_k(t)$ holds $\|r_k(s)\|\leqslant\tol$ for $s=t/3$, $s=2t/3$ and $s=t$.}
\label{f:alg}
\end{figure}

\subsection{Solving the shifted linear systems}
We now show that an LU factorization computed for the shifted
matrix $I+\gamma A$ can be successfully used as a preconditioner
for the shifted matrix $I+\gammat A$ with an adjusted shift $\gammat$
such that $0<\gammat\leqs\gamma$.
More precisely, the shifted linear system
$$
\Aa x = b, \qquad \Aa =I+\gammat A,
$$
is preconditioned as
\begin{equation}
\label{prec}  
\Mm^{-1}\Aa x =\Mm^{-1}b, \qquad \Mm =I+\gamma A.
\end{equation}
It is then not difficult to show (see Proposition~\ref{sai_syst} below)
that even simple Richardson iteration for the preconditioned
system~\eqref{prec}, namely,
\begin{equation}
\label{Rich}
x_{m+1}=\tilde{G}x_m + \Mm^{-1}b,  \qquad \tilde{G}=I-\Mm^{-1}\Aa,
\end{equation}
converge unconditionally, i.e., for the spectral radius $\rho(\tilde{G})$
of $\tilde{G}$ holds $\rho(\tilde{G})<1$.  Hence, the eigenvalues
of the preconditioned matrix $\Mm^{-1}\Aa$ are located
on the complex plane inside the unit circle centered
at point $z=1+0i$, $i^2=-1$.
This means that any other modern Krylov subspace method such as GMRES,
BiCGSTAB, QMR and other~\cite{templates,Henk:book,SaadBook2003} should successfully converge for the
preconditioned linear system~\eqref{prec}.

On the other hand, the smaller the shift value $\gammat$,
the better the shifted matrix $I+\gammat A$ is conditioned.
Hence, for a small $\gammat$ it may turn out
that an unpreconditioned iterative method converges
fast enough.
Therefore, in Proposition~\ref{sai_syst} we give
a sufficient condition which guarantees that the preconditioned
Richardson method converges faster than unpreconditioned
one.

\begin{proposition}
\label{sai_syst}
Let $0<\gammat\leqs\gamma$ and a linear system with the matrix
$I+\gammat A$ is solved iteratively.  Then Richardson iteration~\eqref{Rich} with
the preconditioner matrix $I+\gamma A$ converges.

Furthermore, assume  unpreconditioned Richardson iteration
converges, too.
Then Richardson iteration with the preconditioner matrix $I+\gamma A$ 
converges faster than
unpreconditioned Richardson iteration provided that
\begin{equation}
\label{convRich}
\frac{1}{1+\gamma \rho(A)}<\frac\gammat\gamma,  
\end{equation}
where $\rho(A)$ is the spectral radius of the matrix $A$.
\end{proposition}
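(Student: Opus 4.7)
The plan is to reduce the iteration matrix of the preconditioned Richardson method to a simple rational function of $A$, identify its spectrum in terms of the spectrum of $A$, and then use the field-of-values condition~\eqref{fov} to bound the moduli of its eigenvalues.

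First, I would rewrite the preconditioned iteration matrix as
\[
\tilde G = I-\Mm^{-1}\Aa = (I+\gamma A)^{-1}\bigl[(I+\gamma A)-(I+\gammat A)\bigr]=(\gamma-\gammat)(I+\gamma A)^{-1}A .
\]
Since $\tilde G$ is a rational function of $A$, each eigenvalue of $\tilde G$ has the form
$\mu=\dfrac{(\gamma-\gammat)\lambda}{1+\gamma\lambda}$ for some eigenvalue $\lambda$ of $A$.
From~\eqref{fov}, applied to an eigenvector of $A$, every such $\lambda=a+ib$ satisfies $a=\Re(\lambda)\geqs 0$, so
\[
|1+\gamma\lambda|^2=(1+\gamma a)^2+\gamma^2 b^2 = 1+2\gamma a+\gamma^2|\lambda|^2>\gamma^2|\lambda|^2,
\]
hence $|1+\gamma\lambda|>\gamma|\lambda|$.

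For part one, substituting this bound gives, for every eigenvalue $\mu$ of $\tilde G$,
\[
|\mu|=(\gamma-\gammat)\,\frac{|\lambda|}{|1+\gamma\lambda|}<\frac{\gamma-\gammat}{\gamma}=1-\frac{\gammat}{\gamma}<1,
\]
so $\rho(\tilde G)<1$, which gives convergence of the preconditioned Richardson iteration.  The bound $\rho(\tilde G)\leqs 1-\gammat/\gamma$ is the key quantitative consequence.

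For part two, the unpreconditioned Richardson iteration has iteration matrix $G=I-\Aa=-\gammat A$, so $\rho(G)=\gammat\rho(A)$ (and its convergence is exactly the assumed condition $\gammat\rho(A)<1$).  Combining this with the bound $\rho(\tilde G)\leqs 1-\gammat/\gamma$ from part one, a sufficient condition for $\rho(\tilde G)<\rho(G)$ is
\[
1-\frac{\gammat}{\gamma}<\gammat\,\rho(A),
\]
which, after multiplying through by $\gamma$ and rearranging, is exactly
\[
\frac{\gamma}{\gammat}<1+\gamma\rho(A)\quad\Longleftrightarrow\quad \frac{1}{1+\gamma\rho(A)}<\frac{\gammat}{\gamma},
\]
i.e.~\eqref{convRich}.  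The only subtle point, and the main thing worth double-checking, is that the strict inequality $|1+\gamma\lambda|>\gamma|\lambda|$ is used with the finite spectrum of $A$, so the supremum defining $\rho(\tilde G)$ is attained and the strict inequality~\eqref{convRich} indeed translates into strict inequality of spectral radii.
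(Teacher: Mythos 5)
Your proposal is correct and follows essentially the same route as the paper: both identify the eigenvalues of the preconditioned iteration matrix as $(\gamma-\gammat)\lambda/(1+\gamma\lambda)=(1-\gammat/\gamma)\,\gamma\lambda/(1+\gamma\lambda)$, bound their moduli by $1-\gammat/\gamma<1$ using $\Re(\lambda)\geqs 0$ (which follows from~\eqref{fov}), and then compare this bound with $\rho(G)=\gammat\rho(A)$ to obtain~\eqref{convRich}. Your explicit verification of $|1+\gamma\lambda|>\gamma|\lambda|$ is just a spelled-out version of the inequality the paper states directly.
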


\begin{proof}
Let $\lambda$ be an eigenvalue of $A$.
The eigenvalues of the preconditioned matrix $(I+\gamma A)^{-1}(I+\gammat A)$ read
$$
\frac{1+\gammat\lambda}{1+\gamma\lambda}=
1-(1-\frac\gammat\gamma)\frac{\gamma\lambda}{1+\gamma\lambda}.
$$
The preconditioned Richardson iteration converge if and only if all
the eigenvalues of the iteration matrix $\tilde{G}=I-(I+\gamma A)^{-1}(I+\gammat A)$
are smaller in absolute value than 1, i.e.,
$$
\left|(1-\frac\gammat\gamma)\frac{\gamma\lambda}{1+\gamma\lambda}\right|<1.
$$
The left-hand side of this inequality can be bounded as 
$$
\left|(1-\frac\gammat\gamma)\frac{\gamma\lambda}{1+\gamma\lambda}\right|\leqs
\frac{|\gamma\lambda|}{|1+\gamma\lambda|}<1,
$$
where the last inequality holds because all the eigenvalues of $A$
have nonnegative real parts (see~\eqref{fov}).
Hence, the preconditioned Richardson method converges.

Furthermore note that the unpreconditioned Richardson iteration matrix
reads $G=I-(I+\gammat A)=-\gammat A$.
The preconditioned Richardson iteration converges faster than
unpreconditioned one provided that $\rho(\tilde{G})<\rho(G)$, i.e.,
$$
(1-\frac\gammat\gamma)\max_{\lambda}
\left|\frac{\gamma\lambda}{1+\gamma\lambda}\right|
<\gammat\max_{\lambda}|\lambda|=\gammat\rho(A). 
$$
The left-hand side here can be bounded by $1-\frac\gammat\gamma$
and we see that the inequality holds as soon as
$$
(1-\frac\gammat\gamma)<\gammat\rho(A).
$$
It is easy to check that the last inequality is equivalent to~\eqref{convRich}.
\end{proof}

\section{Numerical experiments}
\label{s:exper}
\subsection{Experiment setup and details}
We implemented the AccuRT algorithm as shown in Figure~\ref{f:alg}
with two small modifications.  First, as soon as $\gamma$ is decreased
in the restart phase of the algorithm (see the line $\gamma:=\gamma/2$), 
we restrict the time interval on which the residual 
norm minimum is searched for on the next restart from $[0,t]$ to $[0,t/2]$.
This is done to account for the fact that the residual norm 
$\|r_k(s)\|$ is likely not to be small for $s>t/2$, see Figure~\ref{f:rk1}.
Once a restart is successful, i.e., $\delta>0$ and the time interval is
decreased (line $t:=t-\delta$), we set the search interval back to $[0,t]$. 
Another small modification is that, as an option, GMRES(10) can be used as a linear
solver instead of the LU factorization, not only when $\gamma$ is decreased.
In this case the ILU($\varepsilon$) preconditioner can be used which is 
computed only once and re-used for all values of $\gamma$.
To decide whether to use the preconditioner or not, Proposition~\ref{sai_syst}
can be used.

Initial value for $\gamma$ can be provided to our AccuRT subroutine as 
an optional parameter.  By default, if $\gamma$ is not provided by the user,
it is set to $t/20$.  Note that $\gamma=t/10$ is 
suggested in~\cite{EshofHochbruck06} an appropriate value for 
moderate tolerances $\approx 10^{-6}$ for symmetric matrices.
Setting $\gamma$ to $t/20$ appears to
be a reasonable choice because, as our limited experience suggests, optimal
values of $\gamma$ for nonsymmetric matrices are usually smaller than $t/10$.

In the experiments below, within the framework of the SAI Krylov subspace
method, 
we compare the performance of the AccuRT restarting 
with that of the RT restarting.
The RT restarting has been recently compared with three other
restarting strategies, namely, the EXPOKIT restarting~\cite{EXPOKIT}, 
the Niehoff--Hochbruck restarting~\cite{PhD_Niehoff} and
the residual restarting~\cite{CelledoniMoret97,BGH13}.  
The presented numerical tests are performed in Matlab on a Linux PC
with 8~CPUs Intel Xeon~E5504 2.00GHz.  % spring in November 2019

\subsection{Convection--diffusion problem}
\label{s:t1}
The matrix $A$ in this problem is obtained by a standard five point
central-difference discretization
of a convection--diffusion operator defined on 
functions $u(x,y)$, with $(x,y)\in \Omega=[0,1]\times [0,1]$, 
and $u|_{\partial\Omega}=0$.  The operator reads
\begin{gather*}
  L[u]=-(D_1u_x)_x-(D_2u_y)_y + \Pe{}\,\left(
  \frac12(v_1u_x + v_2u_y) + \frac12((v_1u)_x + (v_2u)_y) \right)
\\
D_1(x,y)=\begin{cases}
    10^3 \; &(x,y)\in [0.25,0.75]^2, \\
    1       &\text{otherwise},
    \end{cases}\qquad\quad D_2(x,y)=\frac12 D_1(x,y),
\\
v_1(x,y) = x+y,\qquad v_2(x,y)=x-y.
\end{gather*}
Here the special way the convective terms are written in
takes care that the discretized convection terms
give a skew-symmetric matrix~\cite{Krukier79}.
In the experiments, we use a uniform
$802\times 802$ grid and the Peclet numbers $\Pe=200$
and $\Pe=1000$.
The problem size for this grid is
$n=800^2=640\,000$.
% pe100nonsmoo402x402.mat
% >> sqrt(norm(A+A',1)*norm(A+A',inf)/4)
% ans =  6000
% >> sqrt(norm(A-A',1)*norm(A-A',inf)/4)
% ans =  0.49704
% >> load pe200nonsmoo802x802.mat 
% >> sqrt(norm(A+A',1)*norm(A+A',inf)/4)
% ans =  6000
% >> sqrt(norm(A-A',1)*norm(A-A',inf)/4)
% ans =  0.49852
For both Peclet numbers we have $\|\frac12(A+A^T)\|_2\approx 6000$,
whereas $\|\frac12(A-A^T)\|_2\approx 0.5$ for $\Pe=200$
and $\|\frac12(A-A^T)\|_2\approx 2.5$ for $\Pe=1000$.
Hence, in both cases the matrices are weakly non-symmetric.
The values of the function $\sin(\pi x)\sin(\pi y)$ on the 
finite-difference grid are assigned to the initial vector $v$,
which is then normalized as $v:=v/\|v\|$.
The final time is set to $t=1$.

In this test the initial value of $\gamma$ in the AccuRT restarting 
is not altered from its default value $t/20$, whereas 
the RT restarting uses the usual value $t/10$.  This does not
necessarily gives an advantage to AccuRT because optimal
$\gamma$ values detected by AccuRT are smaller than $t/20$ anyway.

Results for this test problem are presented in Table~\ref{t:c-d}.
As can be seen in the first two lines of the table, %{t:c-d}
the RT restarting does not yield a better accuracy as the tolerance
gets smaller.  The AccuRT implementation with the same sparse LU
factorization as the linear solver (see line~3 of the table)
is able to give the required accuracy, although the CPU time is
increased by a factor of 10.
Note that this CPU time measurement (done in MAT LAB) 
is not very representative: it does not correspond to the number 
of steps (77 versus 30).
This is because direct solvers (LU~factorization
and the backslash operator) are implemented in MAT LAB quite 
efficiently, whereas the iterative solvers not.
Hence, in the test problem the CPU time for 
applying the sparse LU factorization as a preconditioner
within the GMRES(10) solver turns out to be relatively high.
However, once the algorithm
detects a suitable value of $\gamma$, this value can be
used for other initial vectors $v$: in this case, as seen in
line~4 of the table, we get the same high accuracy for a moderate
increase in the CPU time.

Furthermore, we evaluate our approach on this test problem
with an iterative inner solver.  For this purpose we used GMRES(10)
with the ILUT($\varepsilon=10^{-3}$) preconditioner.
Line~5 of the table says
the RT restarting requires now 37 SAI-Arnoldi steps.
This is because at the
end of the third restart the residual turns out to be just above
the tolerance $10^{-8}$ (and it is just below the tolerance when
the direct inner solver is used, thus resulting in 30~steps,
already seen in line~3 of the table).
AccuRT with the same inner iterative setting requires twice as long 
CPU time but the accuracy is now improved, see line~6.
Finally, once the AccuRT has detected a proper value of $\gamma$,
its costs get well below the costs of the RT restarted algorithm.

In the lower part of Table~\ref{t:c-d} the results for 
a higher Peclet number are presented.  For the tolerance $10^{-6}$
the RT restarting gives a result with an accuracy {\tt1.17e-06}.
This seems to be fine.  However, for the tolerance $10^{-7}$
the attained accuracy is the same, while the computational costs
increase from 16 to 23~steps.  In the following two lines 
of the table we see the results for the AccuRT restarting.
It yields an improved accuracy for the increased CPU time
(78.6~s instead of 63.2~s).  As seen in the next table
line, once a proper value of $\gamma$
is detected, the same accuracy can be obtained within less
CPU time, namely 47.9~s.

\begin{table}
\caption{Results for the convection--diffusion test problem, 
$802\times 802$ grid, final time $t=1$.}
\label{t:c-d}
\centering\begin{tabular}{lccc}
\hline\hline
Method & Tolerance,    &  CPU      & Steps (inner \\ 
       & error         & time (s)  & iterations)  \\
\hline
\multicolumn{4}{c}{$\Pe=200$, restart length 10}\\
% cd ~/wrk/doc/matfun/matfun_split/rst_t/tests/ex3_conv_diff/
% test_expm_ArnoldiSAI(1,10,200.802,1e-6)
RT, sparse LU   & {\tt1e-06}, {\tt2.50e-07} & 46.2 & 20 (---)    \\
% test_expm_ArnoldiSAI(1,10,200.802,1e-8)
RT, sparse LU   & {\tt1e-08}, {\tt2.59e-07} & 48.7 & 30 (---)    \\
% test_expm_ArnoldiSAI_ert(1,10,200.802,1e-8)
AccuRT, sparse LU, GMRES(10)
                & {\tt1e-08}, {\tt1.35e-08} & 515  & 77 (1022) \\
% test_expm_ArnoldiSAI_ert(1,10,200.802,1e-8,0,1/80)
AccuRT, sparse LU, GMRES(10), detected $\gamma$
                & {\tt1e-08}, {\tt1.38e-08} & 58.2 & 57 (---)  \\
% test_expm_ArnoldiSAI_iter(1,10,200.802,1e-8)
RT, GMRES(10)/ILUT
                & {\tt1e-08}, {\tt2.58e-07} & 94.1  & 37 (454) \\
% test_expm_ArnoldiSAI_ert_iter(1,10,200.802,1e-8)
AccuRT, GMRES(10)/ILUT
                & {\tt1e-08}, {\tt1.85e-08} & 193  & 77 (1258) \\
% test_expm_ArnoldiSAI_ert_iter(1,10,200.802,1e-8,0,1/80)
AccuRT, GMRES(10)/ILUT, detected $\gamma$
                & {\tt1e-08}, {\tt1.51e-08} & 69.8 & 57 (342)  \\
\hline
\multicolumn{4}{c}{$\Pe=1000$, restart length 8}\\
% test_expm_ArnoldiSAI_iter(1,8,1000.802,1e-6)
RT, GMRES(10)/ILUT  & {\tt1e-06}, {\tt1.17e-06} & 52.3 & 16 (176)\\
% test_expm_ArnoldiSAI_iter(1,8,1000.802,1e-7)
RT, GMRES(10)/ILUT  & {\tt1e-07}, {\tt1.17e-06} & 66.1 & 23 (253)\\
% test_expm_ArnoldiSAI_ert_iter(1,8,1000.802,1e-6)
AccuRT, GMRES(10)/ILUT & {\tt1e-06}, {\tt3.58e-07} & 79.3 & 35 (356)\\
% test_expm_ArnoldiSAI_ert_iter(1,8,1000.802,1e-6,0,0.025)
AccuRT, GMRES(10)/ILUT, detected $\gamma$ 
                    & {\tt1e-06}, {\tt3.07e-07} & 47.9 & 27 (174)\\
\hline
\multicolumn{4}{c}{$\Pe=1000$, restart length 7}\\
% test_expm_ArnoldiSAI_iter(1,7,1000.802,1e-6)
RT, GMRES(10)/ILUT  & {\tt1e-06}, {\tt4.11e-06} & 60.6 & 21 (231)\\
% test_expm_ArnoldiSAI_ert_iter(1,7,1000.802,1e-6)
AccuRT, GMRES(10)/ILUT & {\tt1e-06}, {\tt1.47e-06} & 44.2 & 17 (136)\\
% test_expm_ArnoldiSAI_ert_iter(1,7,1000.802,1e-6,0,0.1)
AccuRT, GMRES(10)/ILUT, perturbed $\gamma$ 
                    & {\tt1e-06}, {\tt1.47e-06} & 65.0 & 24 (274)\\
\hline
\end{tabular}
\end{table}

\subsection{Maxwell's equations in a lossless medium}
Consider the Maxwell equations for a three-dimensional domain 
in a lossless and source-free medium:
\begin{equation}
\label{mxw}
\begin{aligned}
  \frac{\partial\bm{H}}{\partial t} &= -\frac{1}{\mu}\nabla\times\bm{E},
  \\
\frac{\partial\bm{E}}{\partial t} &= \frac{1}{\varepsilon}\nabla\times\bm{H},
\end{aligned}
\end{equation}
Here
$\varepsilon$ and $\mu$ are scalar functions of $(x,y,z)$
called permittivity and permeability, respectively,
whereas magnetic field $\bm{H}$ and 
electric field $\bm{E}$ are unknown vector-valued functions of $(x,y,z,t)$.
The boundary conditions assign zero to the tangential electric field
components.
Physically this can mean either
perfectly conducting domain boundary or the so-called 
``far field condition''~\cite{Taflove,Kole01}.
%implying that the boundary is placed far enough from the phenomena of
%interest. 
%For simulations in an unbounded domain with the perfectly matched layer
%(PML~\cite{PML94,Johnson2010_PML}) boundary
%conditions we refer to~\cite{Botchev2016}.
%
The problem setup is adopted from the second test in~\cite{Kole01}:
in a spatial domain 
$[-6.05, 6.05]\times [-6.05, 6.05]\times [-6.05, 6.05]$
filled with air (relative permittivity $\varepsilon_r=1$) 
a dielectric specimen with relative permittivity $\varepsilon_r=5.0$
is placed which occupies the region
$[-4.55, 4.55]\times [-4.55, 4.55]\times [-4.55, 4.55]$.
In the specimen there are 27~spherical voids ($\varepsilon_r=1$)
of radius~$1.4$, whose centers are positioned at 
$(x_i,y_j,z_k)=(3.03 i, 3.03j, 3.03 k)$, $i,j,k= -1,0,1$.
The initial values for all the components of both fields
$\bm{H}$ and $\bm{E}$ are set to zero, except for the
$x$- and $y$-components of $\bm{E}$.
These two have nonzero values in the middle of the
spatial domain to represent a light emission.
The standard finite-difference staggered Yee discretization in space
leads to an ODE system of the form~\eqref{ivp}. 
The spatial meshes used in the test comprise either $40\times 40\times 40$
or $80\times 80\times 80$ grid Yee cells and
lead to problem size $n=413\,526$ or 
$n=3\,188\,646$, respectively.
After the discretization is carried out, the initial value vector
$v\in\Rr^n$ is normalized as $v:=v/\|v\|$.
Comparison of the results obtained for the two meshes
shows that this spatial resolution should be sufficient for this test.
The final time is set to $t=1$.

This test represents is a tough problem for the SAI Krylov method
because the matrix $A$ is strongly nonsymmetric (in fact, a diagonal
matrix $D$ can be chosen such that $D^{-1}AD$ is skew-symmetric).
For strongly nonsymmetric problems, such as the discretized lossless 
Maxwell equations,
SAI Krylov methods are likely to be inefficient \cite{VerwerBotchev09}
and, indeed, other restarted Krylov
subspace exponential schemes are reported to be more efficient
for this test problem~\cite{BoKn2020}.  Furthermore, it is 
a three-dimensional vector problem where 6~unknowns ($x$, $y$ and $z$
vector components for each field) are associated with each 
computational cell.  Hence, depending on the specific parameter 
values, solving linear systems with the shifted matrix $I+\gamma A$
may not be a trivial task.
However, for this particular test setting 
it turns out that $\gamma\|A\|$ is small
enough so that condition~\eqref{convRich} does not hold and even unpreconditioned Richardson
iteration can successfully be used for solving the shifted linear systems.
In the test runs, we use unpreconditioned GMRES(10) iterative solver. 
Therefore, % since the shifted linear system can be solved efficiently,
we include this test to show capabilities of the proposed AccuRT restarting.

\begin{figure}
% cd matfun/matfun_split/rst_t/tests/ex2_mosc_t3
% conv_ert_vs_rt
\begin{center}
\includegraphics[width=0.8\textwidth]{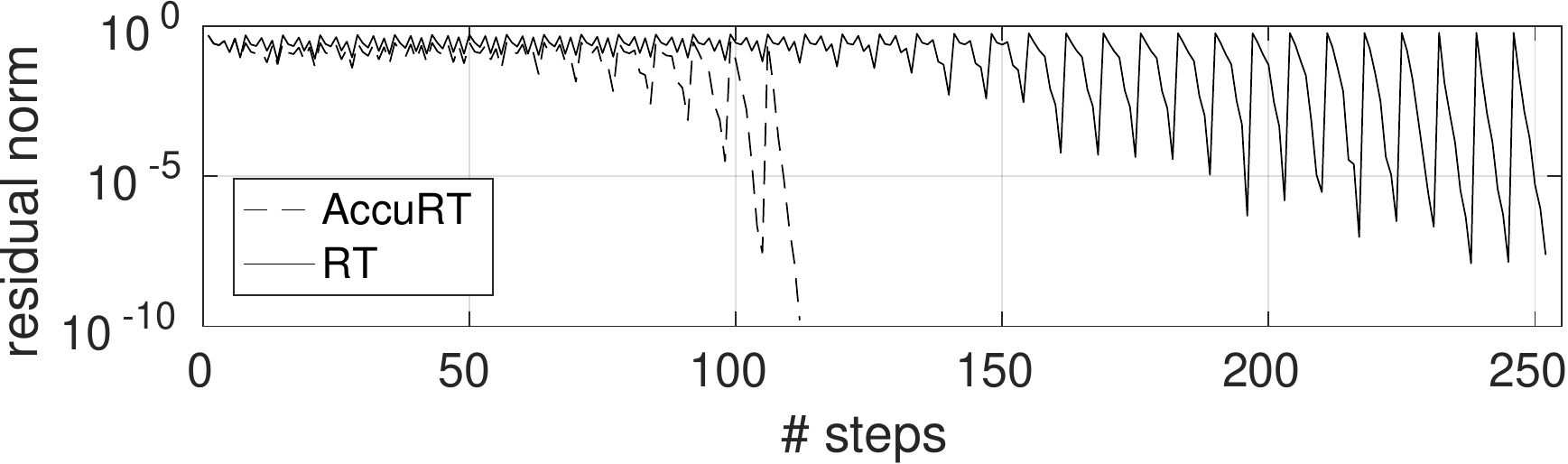}
\\
\includegraphics[width=0.8\textwidth]{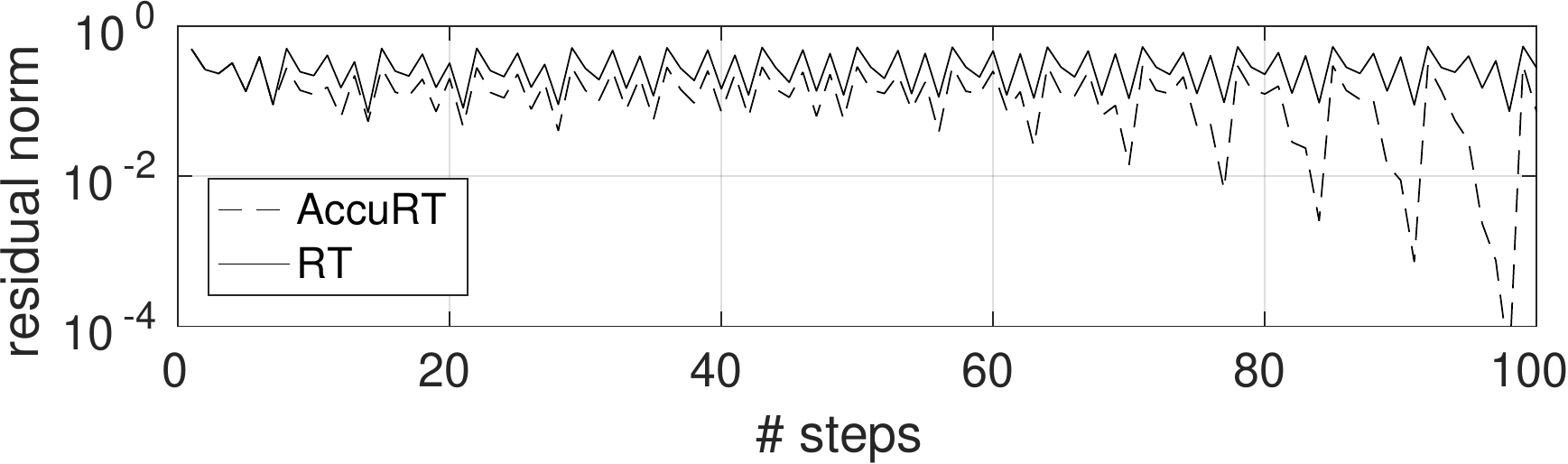}
\end{center}
\caption{Convergence of the SAI Krylov method with RT (solid line) and AccuRT
  (dashed line) restarting for lossless Maxwell equations test,
  mesh $40\times 40\times 40$, restart length 7. The bottom
  plot is a close up of the top plot.  Each zigzag corresponds to
a restart.}
\label{f:mxw}
\end{figure}

Our experience indicates that,
to have a fast convergence with SAI Krylov methods
for strongly nonsymmetric matrices $A$,
$\gamma$ should be taken significantly 
smaller than the commonly used value $t/10$~\cite{Botchev2016}.  
Hence we set $\gamma$ to $t/80=1/80$
for both RT and AccuRT methods, and the AccuRT restarting
can eventually further decrease this value.  
The results are presented 
in Table~\ref{t:mxw} and in Figures~\ref{f:mxw} and~\ref{f:eff}.
The AccuRT restarting clearly outperforms the RT restarting
both in terms the costs and the attained accuracy.
The accuracy loss occurs at the first restarts when
the residual norm turns out to be nowhere within the required
tolerance.  The AccuRT then reduces $\gamma$ which not only repairs
the accuracy loss but also leads to a faster solution of the shifted
systems (the smaller $\gamma$, the faster unpreconditioned GMRES
converges).  Moreover, the reduced $\gamma$ leads then to a further
efficiency gain by AccuRT.
As can be seen in Figure~\ref{f:eff}, it is obtained due to
larger intervals $[0,\delta]$ on which the residual appears to
be smaller than the required tolerance (recall that the time interval
at each restart reduces from $[0,t]$ to $[0,t-\delta]$).

\begin{table}
\caption{Results for the lossless Maxwell equation test problem, final time $t=1$.}
\label{t:mxw}
\centering\begin{tabular}{lccc}
\hline\hline
Method & Tolerance,    &  CPU      & Steps (inner \\ 
       & error         & time (s)  & iterations)  \\
\hline
\multicolumn{4}{c}{mesh $40\times 40\times 40$, restart length 7}\\
% cd ~/wrk/doc/matfun/matfun_split/rst_t/tests/ex2_mosc_t3
% test_expm_ArnoldiSAI_iter(1,7,1e-9)
RT, GMRES(10)   & {\tt1e-09}, {\tt2.97e-07} & 138.1 & 252 (2268)\\
% test_expm_ArnoldiSAI_ert(1,7,1e-9)
AccuRT, GMRES(10)  & {\tt1e-09}, {\tt6.60e-08} & 50.3 & 112 (798) \\
\hline
\multicolumn{4}{c}{mesh $80\times 80\times 80$, restart length 8}\\
% test_expm_ArnoldiSAI_iter(1,8,1e-9,0,1/80,80)
RT, GMRES(10)   & {\tt1e-09}, {\tt7.20e-07} & 1888  & 312 (3781)\\
% test_expm_ArnoldiSAI_ert(1,8,1e-9,0,1/80,80)
AccuRT, GMRES(10)  & {\tt1e-09}, {\tt3.92e-08} &  785 & 191 (1410)\\
\hline
\end{tabular}
\end{table}

\begin{figure}
% matfun/matfun_split/rst_t/tests/ex2_mosc_t3/plot_rst_eff.m
\begin{center}
\includegraphics[width=0.8\textwidth]{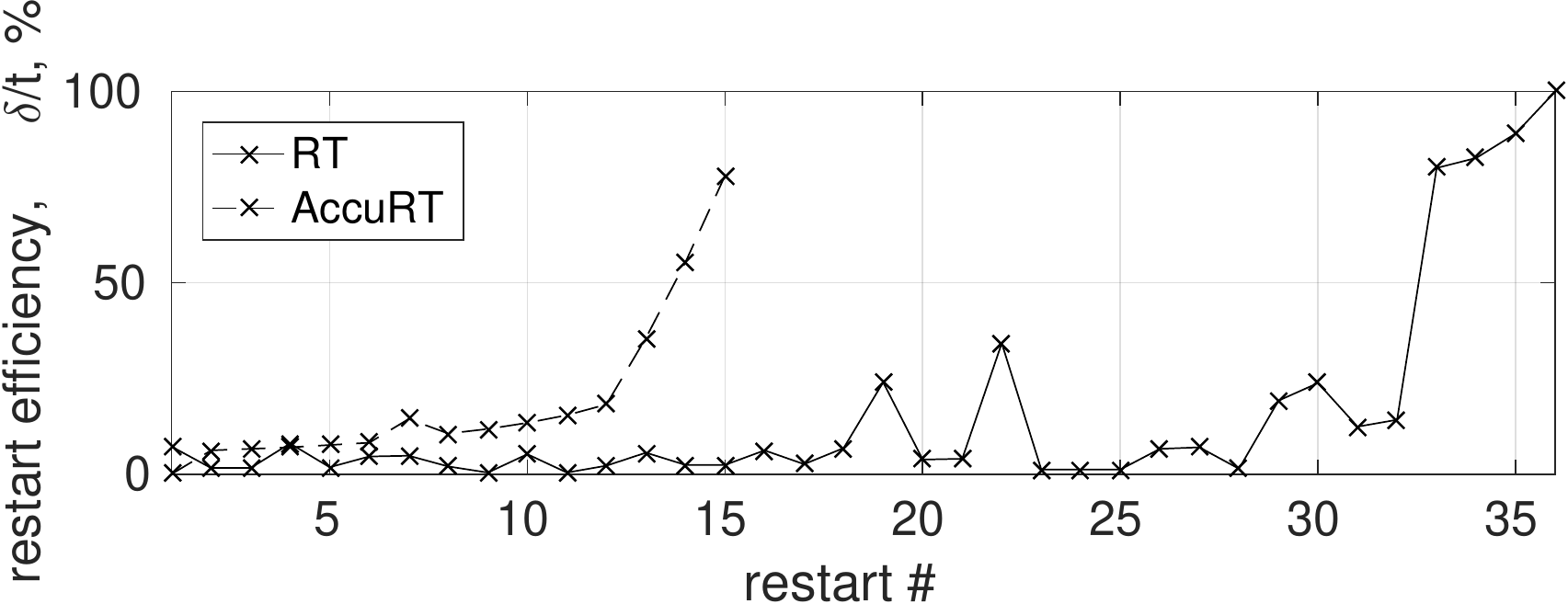}
\end{center}
\caption{Restarting efficiency as a ratio of the decreased
  time interval length $\delta$ and total remaining time interval length $t$
for the lossless Maxwell equations test,
mesh $40\times 40\times 40$, restart length 7. Efficiency $0\%$ at
the first restart means that $\gamma$ is decreased by AccuRT.}
\label{f:eff}
\end{figure}

\section{Conclusions}
\label{s:concl}
The presented AccuRT (accurate residual--time) restarting seems 
to be a promising approach
for SAI (shift-and-invert) Krylov subspace methods evaluating the matrix
exponential of nonsymmetric matrices.  It has all the attractive
properties of the standard RT (residual--time) restarting and allows to avoid its 
accuracy loss while preserving efficiency of the method.

Several research directions for further studies can be indicated.
First, the minimum search of the residual norm is currently carried out
on a uniform set of points spread over the time interval.  This could 
also be done on a nonuniform grid refined in the regions
where the residual norm has its local minima.  An adaptive procedure
for building such a mesh could be designed.
Furthermore, a question on how to extend this approach
to symmetric matrices could be investigated.  We hope
to address these issues in the future.

\bibliography{matfun,my_bib,mxw}
\bibliographystyle{abbrv}
\end{document}